\newcommand{\myforall}{~~{\rm for\ all}~~}
\newcommand{\myforsome}{~~{\rm for\ some}~~}
\newcommand{\myif}{~~{\rm if}~~}
\newcommand{\conjugacy}[2]{{#2} \circ {#1} \circ {#2}^{-1}}
\newcommand{\commute}[3]{{#3} \circ {#2} = {#2} \circ {#1}}
\newcommand{\Per}{{\rm Per}}
\newcommand{\percon}[2]{{(#1)} \overset{{\rm per}}{\longrightarrow} {(#2)}}
\newcommand{\tfullshift}{two-sided full shift}
\newcommand{\tsubshift}{two-sided subshift}
\newcommand{\kinsetu}{\vartriangleright}
\newcommand{\Z}{\mathbf{Z}}
\newcommand{\Nonne}{\mathbf{N}}
\newcommand{\Posint}{\Z_+}
\newcommand{\beposint}{\in \Posint}
\newcommand{\diam}{{\rm diam}}
\newcommand{\mesh}{{\rm mesh}}
\newcommand{\SFT}{{\rm SFT}} %subshift of finite type
\newcommand{\Hom}{\mathcal{H}} %homeomorphisms
\newcommand{\End}{\mathcal{H^+}} %(onto)Endomorphisms
\newcommand{\U}{\mathscr{U}} %covering
\newcommand{\kuu}{\emptyset}
\newcommand{\notkuu}{\neq \kuu}
\newcommand{\invpi}{\pi^{-1}}
\newcommand{\enuma}{\begin{enumerate}}
\newcommand{\enumz}{\end{enumerate}}
\newtheorem{thm}{Theorem}[section]
\newtheorem{lem}[thm]{Lemma}
\newtheorem{prop}[thm]{Proposition}
\newtheorem{cor}[thm]{Corollary}
\theoremstyle{definition}
\theoremstyle{remark}
\numberwithin{equation}{section}
\begin{document}

\title[CHAIN MIXING ENDOMORPHISMS ARE APPROXIMATED]{CHAIN MIXING ENDOMORPHISMS ARE\\
APPROXIMATED BY SUBSHIFTS ON\\
THE CANTOR SET}

\author{TAKASHI SHIMOMURA}

\address{Nagoya Keizai University, Uchikubo 61-1, Inuyama 484-8504, Japan}
\curraddr{}
\email{tkshimo@nagoya-ku.ac.jp}
\thanks{}

\subjclass[2010]{Primary 37B10, 54H20}

\keywords{Cantor set, shift, subshift, subshift of finite type, chain mixing, approximation}

\date{December 13, 2010}

\dedicatory{}

\commby{}

\begin{abstract}
Let f be a chain mixing continuous onto mapping from the Cantor set onto itself. Let g be a homeomorphism on the Cantor set that is topologically conjugate to a subshift. Then, homeomorphisms that are topologically conjugate to g approximate f in the topology of uniform convergence if a trivial necessary condition on the periodic points holds.
In particular, if f is a chain mixing continuous onto mapping from the Cantor set onto itself with a fixed point, then homeomorphisms on the Cantor set that are topologically conjugate to a subshift approximate f in the topology of uniform convergence.
In addition, homeomorphisms on the Cantor set that are topologically conjugate to a subshift without periodic points approximate any chain mixing continuous onto mappings from the Cantor set onto itself.
In particular, let f be a homeomorphism on the Cantor set that is topologically conjugate to a full shift.
Let g be a homeomorphism on the Cantor set that is topologically conjugate to a subshift.
Then, a sequence of homeomorphisms that is topologically conjugate to g approximates f.
\end{abstract}

\maketitle

\section{Introduction}
Let $(X,d)$ be a compact metric space.
Let $f:X \to X$ be a continuous onto mapping.
In this manuscript, the pair $(X,f)$ is called a {\it topological dynamical system}.
Let $\End(X)$ be the set of all topological dynamical systems on $X$.
For any $f$ and $g$ in $\End(X)$, we define $ d(f,g):=\sup_{x \in X }d(f(x),g(x))$.
Then, $(\End(X),d)$ is a metric space of uniform convergence.
$\Hom(X)$ denotes the set of all homeomorphisms from $X$ onto itself.
In this manuscript, we mainly consider the case in which $X$ is homeomorphic to the Cantor set, denoted by $C$.
T.~Kimura \cite[Theorem 1]{Kimura} and I \cite{Shimomura} have shown that the subset of $\Hom(C)$ consisting of all expansive homeomorphisms with the pseudo-orbit tracing property is dense in $\Hom(C)$.
$\SFT(C)$ denotes the set of all $f \in \Hom(C)$ that is topologically conjugate to some \tsubshift\ of finite type.
Then, $\SFT(C)$ coincides with the set of all expansive $f \in \Hom(C)$ with the pseudo-orbit tracing property (P.~Walters \cite[Theorem 1]{Walters}).
Therefore, $\SFT(C)$ is dense in $\Hom(C)$.
%A topological dynamical system $(X,f)$ is said to be {\it topologically transitive} if for any pair of non-empty open sets $U, V \subset X$, there exists a non-negative integer $n$ such that $f^n(U) \cap V \notkuu$.
A topological dynamical system $(X,f)$ is said to be {\it topologically mixing} if for any pair of non-empty open sets $U, V \subset X$, there exists a non-negative integer $N$ such that $f^n(U) \cap V \notkuu$ for all $n > N$.
In \cite{Shimomura}, it is shown that if $f \in \Hom$ is topologically mixing, then there exists a sequence $\{g_k\}_{k = 1,2,\dots}$ of topologically mixing elements of $\SFT(C)$ such that $g_k \to f$ as $k \to \infty$.
Let $f$ be a chain mixing element of $\End(C)$ and $g$, an element of $\Hom(C)$ that is topologically conjugate to a \tsubshift.
In this manuscript, we consider the condition in which homeomorphisms that are topologically conjugate to $g$ approximate $f$.
Let $(X,f)$ be a topological dynamical system and $\delta >0$.
A sequence $\{x_i\}_{i = 0,1,\dots,l}$ of elements of $X$ is a {\it $\delta$ chain} from $x_0$ to $x_l$ if $d(f(x_i),x_{i+1}) < \delta$ for all $i = 0,1,\dots,l-1$.
Then, $l$ is called the length of the chain.
A topological dynamical system $(X,f)$ is {\it chain mixing} if for every $\delta >0$ and every pair $x,y \in X$, there exists a positive integer $N$ such that for all $n > N$, there exists a $\delta$ chain from $x$ to $y$ of length $n$.
Let $(X,f)$ and $(Y,g)$ be topological dynamical systems.
We write $(Y,g) \kinsetu (X,f)$ if there exists a sequence of homeomorphisms $\{\psi_k\}_{k=1,2,\dots}$ from $Y$ onto $X$ such that
 $\conjugacy{g}{\psi_k} \to f$ as $k \to \infty$.
If $(Y,g) \kinsetu (X,f)$ and if $g^n$ has a fixed point for some positive integer $n$, then $f^n$ must also have a fixed point.
We write $\percon{Y,g}{X,f}$ if this trivial necessary condition on periodic points holds.
We show the following:
\begin{thm}\label{thm:main}
Let $X$ be homeomorphic to the Cantor set.
Let $(X,f)$ be a chain mixing topological dynamical system.
Let $(\Lambda,\sigma)$ be a \tsubshift\ such that $\Lambda$ is homeomorphic to $C$.
Then, the following conditions are equivalent:
\enuma
\item $\percon{\Lambda,\sigma}{X,f}$;
\item $(\Lambda,\sigma) \kinsetu (X,f)$. 
\enumz
\end{thm}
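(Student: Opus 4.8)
The plan is to prove the easy implication $(2)\Rightarrow(1)$ outright and to concentrate the real work on $(1)\Rightarrow(2)$. For $(2)\Rightarrow(1)$, suppose $(\Lambda,\sigma)\kinsetu(X,f)$ is witnessed by homeomorphisms $\psi_k$ with $\conjugacy{\sigma}{\psi_k}\to f$, and suppose $\sigma^n$ has a fixed point. Then each conjugate $(\conjugacy{\sigma}{\psi_k})^n=\conjugacy{\sigma^n}{\psi_k}$ has a fixed point $x_k$. Since $f$ is uniformly continuous on the compact space $X$, iteration is continuous at $f$ in the sup metric, so $(\conjugacy{\sigma}{\psi_k})^n\to f^n$ and hence $d(f^n(x_k),x_k)\to 0$; passing to a convergent subsequence $x_k\to x$ and using continuity of $f^n$ gives $f^n(x)=x$. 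Thus $\percon{\Lambda,\sigma}{X,f}$ holds, and the remaining task is the converse.

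For $(1)\Rightarrow(2)$ I would reduce convergence to a single approximation at each scale: fix $\epsilon>0$ and produce one homeomorphism $\psi\colon\Lambda\to X$ with $d(\conjugacy{\sigma}{\psi},f)<\epsilon$, then let $\epsilon\to0$ to obtain the sequence $\psi_k$. Choose a clopen partition $\mathcal{U}$ of $X$ with $\mesh(\mathcal{U})<\epsilon$ and refine it to a clopen partition $\mathcal{V}$ that is \emph{adapted} to $f$, meaning $f$ carries each member of $\mathcal{V}$ into a single member of $\mathcal{U}$ (possible because $f$ is continuous and $X$ is zero-dimensional). Then $d(\conjugacy{\sigma}{\psi},f)<\epsilon$ is guaranteed as soon as $\psi$ satisfies the purely symbolic constraint that for every $\xi\in\Lambda$ the points $f(\psi(\xi))$ and $\psi(\sigma\xi)$ lie in a common member of $\mathcal{U}$. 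Recording for each $\xi$ the $\mathcal{V}$-cell $w(\xi)$ containing $\psi(\xi)$, this becomes the requirement that the labelling $w\colon\Lambda\to\mathcal{V}$ be a morphism into the transition graph $\Gamma$ whose vertices are the members of $\mathcal{V}$ and whose edges $w\to w'$ are those pairs with $w'$ contained in the $\mathcal{U}$-cell that receives $f(w)$.

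Chain mixing is the engine: once $\mathcal{V}$ is taken finer than a Lebesgue number governing the relevant $\delta$-chains, a $\delta$-chain of length $m$ projects to a directed path of length $m$ in $\Gamma$, so $\Gamma$ becomes primitive in the sense that between any ordered pair of vertices there are directed paths of all sufficiently large lengths. I would then build $\psi$ by matching graph-cover presentations: present $(\Lambda,\sigma)$ by its refining sequence of clopen towers (block presentations) and $(X,f)$ by a refining sequence of adapted partitions, and realize the labelling $w$ as a $\Gamma$-morphism that can be extended coherently as both presentations are refined. Because $\Lambda$ is a Cantor set and $\Gamma$ is primitive, there is enough room to make the labelling injective with full image at every finite level; the coherent inverse limit defines a continuous bijection $\psi\colon\Lambda\to X$, which is automatically a homeomorphism since $\Lambda$ is compact and $X$ is Hausdorff, and the level-by-level matching forces $\conjugacy{\sigma}{\psi}$ to agree with $f$ up to the resolution $\mesh(\mathcal{U})$. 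This is exactly the mechanism by which topologically mixing homeomorphisms were approximated by mixing subshifts of finite type in \cite{Shimomura}, now carried out for an arbitrary target subshift $\sigma$.

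The main obstacle, and the sole place where hypothesis $(1)$ is genuinely used, is the periodic structure. Primitivity of $\Gamma$ makes cycles of every large length cheap, but these are only periodic \emph{pseudo}-orbits: a genuine period-$n$ point of $\sigma$ must be coded into a genuine period-$n$ point of $\conjugacy{\sigma}{\psi}$, and in the limit this forces a genuine period-$n$ point of $f$, something the combinatorics of a single $\Gamma$ cannot manufacture. The condition $\percon{\Lambda,\sigma}{X,f}$ supplies, for each period $n$ realized in $\Lambda$, an actual fixed point of $f^n$ on which to anchor the corresponding $\sigma$-orbit, after which chain mixing routes the aperiodic bulk freely. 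The delicate point will therefore be to perform the encoding so that the finitely many short periodic orbits relevant at each scale are pinned to these genuine periodic points of $f$, as \emph{coherent} cycles across all levels of the graph cover, while the nonperiodic part is coded with the slack provided by primitivity — so that the limiting $\psi$ is a homeomorphism and the periodic points persist in the limit rather than dissolving into pseudo-orbits.
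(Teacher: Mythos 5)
Your direction $(2)\Rightarrow(1)$ is correct and is exactly the paper's Lemma \ref{lem:percon}. For $(1)\Rightarrow(2)$ your architecture also matches the paper's: chain mixing makes the transition graph $\Gamma$ on a fine clopen partition primitive (Lemma \ref{lem:bowen}), and the periodic-point hypothesis is what lets $\Lambda$ be coded into the resulting mixing shift of finite type. But the proposal has a genuine gap precisely at the step you yourself call ``the delicate point'': the existence of a continuous labelling $w\colon\Lambda\to\mathcal{V}$ that is a $\Gamma$-morphism (equivalently, a continuous shift-commuting map $\pi\colon\Lambda\to\Sigma(\Gamma)$), hits every cell, and sends periodic points to periodic points. ``There is enough room because $\Gamma$ is primitive'' is not an argument. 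Primitivity gives you paths of all large lengths, but an element of $\Lambda$ can contain arbitrarily long nearly-periodic stretches of \emph{small} period, and these cannot be routed through $\Gamma$ using long-path freedom alone: in the limit they force genuine short-period cycles in $\Sigma(\Gamma)$. The paper resolves this with Krieger's Marker Lemma (Lemma \ref{lem:Krieger}) and the mechanism of Boyle's extension lemma (Lemma \ref{lem:marker-mix}): each $x\in\Lambda$ is decomposed into marked coordinates, short intervals, long $j$-periodic intervals ($j<N$) and transition segments; the long periodic intervals are coded onto genuine periodic orbits of $\Sigma(\Gamma)$, which exist because $\Per(\Lambda,\sigma)\subset\Per(X,f)\subset\Per(\Sigma(\Gamma),\sigma)$ (a genuine $f$-periodic orbit traces a cycle in $\Gamma$). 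Without some version of this marker decomposition your construction does not go through.

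A secondary problem is your mechanism for producing $\psi$. You propose to make the labelling ``injective with full image at every finite level'' and obtain $\psi$ as a coherent inverse limit that is a continuous bijection. The coding $\pi\colon\Lambda\to\Sigma(\Gamma)$ is a factor map and is in general badly non-injective, so this cannot be arranged, and no coherence across scales is needed anyway. The paper's route is simpler: for each $k$ separately, once $\pi_k$ has nonempty clopen preimages $\pi_k^{-1}(U)$ of all cells $U$, choose \emph{any} homeomorphism $\psi_k\colon\Lambda\to X$ with $\psi_k(\pi_k^{-1}(U))=U$ (possible since nonempty clopen subsets of Cantor sets are homeomorphic, Lemmas \ref{lem:subgraph} and \ref{lem:factorinto}); the mesh estimate (Lemma \ref{lem:kinji}) then gives $d(\conjugacy{\sigma}{\psi_k},f)<\epsilon_k$, and the two-stage approximation ($f$ by $\Sigma(G_k)$, then $\Sigma(G_k)$ by $\Lambda$) is assembled by the transitivity Lemma \ref{lem:seqkinsetu}. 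Replacing your inverse-limit bijection with this cell-matching device, and supplying the marker-lemma coding, would close the gap.
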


\begin{cor}\label{cor:main1}
Let $X$ be homeomorphic to the Cantor set.
Let $(X,f)$ be a chain mixing topological dynamical system with a fixed point.
Let $(\Lambda,\sigma)$ be a \tsubshift\ such that $\Lambda$ is homeomorphic to $C$.
Then, $(\Lambda,\sigma) \kinsetu (X,f)$.
\end{cor}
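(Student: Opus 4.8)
The plan is to obtain Corollary~\ref{cor:main1} as an immediate consequence of Theorem~\ref{thm:main}. Since $(X,f)$ is assumed chain mixing and $\Lambda$ is homeomorphic to the Cantor set, both standing hypotheses of the theorem are already in force. Thus it suffices to verify the periodic-point condition $\percon{\Lambda,\sigma}{X,f}$; the equivalence of (1) and (2) in the theorem then delivers $(\Lambda,\sigma) \kinsetu (X,f)$ directly.

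To check $\percon{\Lambda,\sigma}{X,f}$, I would unwind its definition: the condition requires that whenever $\sigma^n$ has a fixed point for some positive integer $n$, the iterate $f^n$ must also have a fixed point. So first I would fix a point $x_0 \in X$ with $f(x_0)=x_0$, which exists by hypothesis. Then for every $n\beposint$ one has $f^n(x_0)=x_0$, so $f^n$ possesses a fixed point unconditionally. Consequently the implication appearing in the definition of $\percon{\Lambda,\sigma}{X,f}$ holds trivially, since its conclusion is always satisfied, irrespective of the periodic structure of $\sigma$.

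I expect no genuine obstacle in this argument: the only content is the elementary observation that a fixed point of $f$ is automatically a fixed point of every iterate $f^n$, which makes the trivial necessary condition hold for free. All of the substantive work resides in Theorem~\ref{thm:main}, whose implication (1)$\Rightarrow$(2) is responsible for actually constructing the approximating conjugates. The corollary simply isolates the most common and easily recognized situation, namely that $f$ fixes a point, in which that necessary condition on periodic points is automatically guaranteed.
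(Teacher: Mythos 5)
Your proposal is correct and coincides with the paper's own proof: the paper likewise observes that a fixed point of $f$ gives $\Per(X,f) = \Posint$, so $\percon{\Lambda,\sigma}{X,f}$ holds automatically and Theorem~\ref{thm:main} applies. No issues.
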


\begin{cor}\label{cor:main2}
Let $X$ be homeomorphic to the Cantor set.
Let $(X,f)$ be a chain mixing topological dynamical system.
Let $(\Lambda,\sigma)$ be a \tsubshift\ such that $\Lambda$ is homeomorphic to $C$ without periodic points.
Then, $(\Lambda,\sigma) \kinsetu (X,f)$.
\end{cor}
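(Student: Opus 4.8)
The plan is to obtain this as an immediate specialization of Theorem~\ref{thm:main}, so the only real work is to check that hypothesis (1) of that theorem is automatically satisfied under the present assumptions. All of the structural hypotheses of Theorem~\ref{thm:main} are already granted: $X$ is homeomorphic to the Cantor set, $(X,f)$ is chain mixing, and $(\Lambda,\sigma)$ is a \tsubshift\ with $\Lambda$ homeomorphic to $C$. What remains is to verify the periodic-point condition $\percon{\Lambda,\sigma}{X,f}$.

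First I would unwind the meaning of $\percon{\Lambda,\sigma}{X,f}$: it asserts that for every $n \beposint$, if $\sigma^n$ has a fixed point then $f^n$ has a fixed point. Next I would record the elementary fact that a point of $\Lambda$ is periodic for $\sigma$ precisely when it is a fixed point of $\sigma^n$ for some $n \beposint$. Consequently, the hypothesis that $(\Lambda,\sigma)$ has no periodic points is equivalent to the assertion that $\sigma^n$ has no fixed point for any $n \beposint$.

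Given this, the condition $\percon{\Lambda,\sigma}{X,f}$ holds vacuously: the antecedent of its defining implication, that $\sigma^n$ has a fixed point, never occurs, so the implication is true for every $n$. Hence condition (1) of Theorem~\ref{thm:main} is met, and the equivalence supplied by that theorem delivers condition (2), namely $(\Lambda,\sigma) \kinsetu (X,f)$, as required.

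Since the corollary is nothing more than a direct application of the main theorem, I do not anticipate any substantive obstacle. The only point demanding a moment of care is the translation between ``$(\Lambda,\sigma)$ has no periodic points'' and ``$\sigma^n$ has no fixed point for every $n \beposint$,'' which is exactly what renders the periodic-point condition vacuous and thereby unlocks the equivalence of Theorem~\ref{thm:main}.
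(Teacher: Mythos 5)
Your proposal is correct and follows essentially the same route as the paper: since $(\Lambda,\sigma)$ has no periodic points, $\Per(\Lambda,\sigma)=\emptyset$, so the condition $\percon{\Lambda,\sigma}{X,f}$ holds vacuously and Theorem~\ref{thm:main} applies directly.
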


\begin{cor}\label{cor:fullshifts}
Let $n > 1$ be an integer.
Let $(\Sigma_n,\sigma)$ be the \tfullshift\ of $n$ symbols.
Let $(\Lambda,\sigma)$ be a \tsubshift\ such that $\Lambda$ is homeomorphic to $C$.
Then, $(\Lambda,\sigma) \kinsetu (\Sigma_n,\sigma)$.
\end{cor}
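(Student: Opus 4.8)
The plan is to obtain Corollary~\ref{cor:fullshifts} as an immediate consequence of Corollary~\ref{cor:main1}, taking the target system $(X,f)$ appearing there to be the full shift $(\Sigma_n,\sigma)$ itself. The whole argument then reduces to verifying that $(\Sigma_n,\sigma)$ meets the three hypotheses of that corollary: that $\Sigma_n$ is homeomorphic to the Cantor set, that $(\Sigma_n,\sigma)$ is chain mixing, and that it possesses a fixed point.

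First I would note that for $n>1$ the space $\Sigma_n=\{0,1,\dots,n-1\}^{\Z}$ with the product topology is compact, perfect, totally disconnected and metrizable, hence homeomorphic to $C$ by the standard characterisation of the Cantor set. Next, the full shift is topologically mixing, and a topologically mixing system is chain mixing: given $\delta>0$ and $x,y\in\Sigma_n$, mixing supplies, for all large $m$, a point $z$ with $z$ near $x$ and $\sigma^m(z)$ near $y$, and then the chain $x,\sigma(z),\sigma^2(z),\dots,\sigma^{m-1}(z),y$ is a $\delta$ chain from $x$ to $y$ of length $m$, since its interior consecutive jumps are exact while the first and last jumps are small once $z$ is taken close enough to $x$ (by continuity of $\sigma$) and $\sigma^m(z)$ is close enough to $y$ (by mixing). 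Finally, every constant bi-infinite sequence is fixed by $\sigma$, so $(\Sigma_n,\sigma)$ certainly has a fixed point; indeed it has $n$ of them.

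With these three facts established, Corollary~\ref{cor:main1} applies directly with $(X,f)=(\Sigma_n,\sigma)$ and with $(\Lambda,\sigma)$ the given subshift, yielding $(\Lambda,\sigma)\kinsetu(\Sigma_n,\sigma)$. I expect no genuine obstacle in this last step: all of the real work has already been carried by Theorem~\ref{thm:main} and Corollary~\ref{cor:main1}, and what remains is only the confirmation of standard structural properties of the full shift. Alternatively, one could invoke Theorem~\ref{thm:main} itself, observing that $\sigma$ on $\Sigma_n$ carries periodic points of every period, so that the condition $\percon{\Lambda,\sigma}{\Sigma_n,\sigma}$ holds automatically for every $\Lambda$; but passing through the fixed point via Corollary~\ref{cor:main1} is the most economical route.
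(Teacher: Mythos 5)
Your proposal is correct and follows exactly the paper's own route: the paper also deduces Corollary~\ref{cor:fullshifts} from Corollary~\ref{cor:main1} by noting that the full shift is chain mixing and has a fixed point. You merely supply more detail (the Cantor-set identification, the mixing-implies-chain-mixing argument, the constant fixed sequences) than the paper's one-line justification.
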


\vspace{5mm}
{\sc Acknowledgments.}
The author would like to thank Professor K.~Shiraiwa for the valuable conversations and the suggestions regarding the first version of this manuscript.

\section{Preliminaries}
Let $\Z$ denote the set of all integers; $\Nonne$, the set of all nonnegative integers; and $\Posint$, the set of all positive integers.
Let $(X,d)$ be a compact metric space. For any two maps $f$ and $g$ from $X$ to  itself, we define $d(f,g) := \sup \{d (f(x),g(x)) ~|~x \in X\}$.

Let $f : X \to X$ be a continuous onto mapping.
In this manuscript, $(X,f)$ is called a {\it topological dynamical system}.
%
%
%A topological dynamical system $(X,f)$ is called a {\it factor} of a topological dynamical system $(Y,g)$ if there exists a continuous onto mapping $\pi : Y \to X$ such that $\commute{g}{\pi}{f}$.
%Such a continuous map $\pi$ is called a {\it factor map}.
A topological dynamical system $(X,f)$ is {\it topologically conjugate} to a topological dynamical system $(Y,g)$ if there exists a homeomorphism $\psi : Y \to X$ such that $\commute{g}{\psi}{f}$.
Such a homeomorphism is called a {\it topological conjugacy}.
In this manuscript, we write $(Y,g) \kinsetu (X,f)$ if there exists a sequence of homeomorphisms $\{\psi_k\}_{k = 1,2,\dots}$ from $Y$ onto $X$ such that $\conjugacy{g}{\psi_k} \to f$ as $k \to \infty$, i.e. $d(\conjugacy{g}{\psi_k},f) \to 0$ as $k \to \infty$.
\begin{lem}\label{lem:seqkinsetu}
Let $(X,f)$ be a topological dynamical system.
Let $(Y_k,g_k)$ $(k = 1,2,\dots)$ be a sequence of topological dynamical systems.
Suppose that there exists a sequence of homeomorphisms $\psi_k : Y_k \to X$ such that $\conjugacy{g_k}{\psi_k} \to f$ as $k \to \infty$.
Let $(Z,h)$ be a topological dynamical system such that $(Z,h) \kinsetu (Y_k,g_k)$ for all $k = 1,2,\dots$.
Then, $(Z,h) \kinsetu (X,f)$.
\end{lem}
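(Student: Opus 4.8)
The plan is to compose the two layers of approximating conjugacies and then extract a diagonal sequence. For each fixed $k$, the assumption $(Z,h) \kinsetu (Y_k,g_k)$ furnishes a sequence of homeomorphisms $\phi_{k,j} : Z \to Y_k$ $(j = 1,2,\dots)$ with $\conjugacy{h}{\phi_{k,j}} \to g_k$ as $j \to \infty$. Setting $h_{k,j} := \conjugacy{h}{\phi_{k,j}}$, a map on $Y_k$, the composite $\psi_k \circ \phi_{k,j} : Z \to X$ is again a homeomorphism, and a direct computation gives the factorization
\[
  \conjugacy{h}{(\psi_k \circ \phi_{k,j})} = \conjugacy{h_{k,j}}{\psi_k}.
\]
Thus it suffices to make the right-hand side close to $f$, which I would arrange by choosing $k$ and then $j$ in turn.

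The estimate proceeds in two steps. Given $\varepsilon > 0$, I would first use the hypothesis $\conjugacy{g_k}{\psi_k} \to f$ to pick $k$ with $d(\conjugacy{g_k}{\psi_k}, f) < \varepsilon/2$. Now $\psi_k$ is a single fixed homeomorphism of a compact metric space, hence uniformly continuous, and the point I would stress is that conjugation by such a fixed $\psi_k$ is continuous in the uniform metric: if $h_{k,j} \to g_k$ on $Y_k$, then $\conjugacy{h_{k,j}}{\psi_k} \to \conjugacy{g_k}{\psi_k}$ on $X$. Indeed, writing $y = \psi_k^{-1}(x)$, one has $d(\conjugacy{h_{k,j}}{\psi_k}, \conjugacy{g_k}{\psi_k}) = \sup_{y \in Y_k} d(\psi_k(h_{k,j}(y)), \psi_k(g_k(y)))$, and uniform continuity of $\psi_k$ converts the uniform smallness of $d(h_{k,j}(y), g_k(y))$ into uniform smallness of the image distances. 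Hence, for this fixed $k$, I can choose $j$ so large that $d(\conjugacy{h_{k,j}}{\psi_k}, \conjugacy{g_k}{\psi_k}) < \varepsilon/2$, and the triangle inequality together with the factorization yields $d(\conjugacy{h}{(\psi_k \circ \phi_{k,j})}, f) < \varepsilon$.

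To finish, I would run this construction with $\varepsilon = 1/m$ for $m = 1,2,\dots$, obtaining indices $k(m), j(m)$ and homeomorphisms $\chi_m := \psi_{k(m)} \circ \phi_{k(m),j(m)} : Z \to X$ satisfying $d(\conjugacy{h}{\chi_m}, f) < 1/m$. Then $\conjugacy{h}{\chi_m} \to f$ as $m \to \infty$, which is exactly the assertion $(Z,h) \kinsetu (X,f)$. The only genuinely substantive point is the continuity of conjugation by the fixed homeomorphism $\psi_k$ in the uniform metric: the precomposition by $\psi_k^{-1}$ is harmless, since it merely reparametrizes the supremum over a bijection, whereas the postcomposition by $\psi_k$ requires compactness of the space to guarantee the uniform continuity that turns uniform convergence on $Y_k$ into uniform convergence on $X$. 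Everything else is a routine $\varepsilon/2$ diagonal argument.
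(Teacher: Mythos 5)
Your proposal is correct and follows essentially the same route as the paper's proof: compose the two conjugating homeomorphisms, use uniform continuity of the fixed $\psi_k$ (guaranteed by compactness) to control the post-composition, and finish with the triangle inequality. The only difference is presentational — you make the diagonal extraction with $\varepsilon = 1/m$ explicit, whereas the paper leaves it implicit after establishing the $\varepsilon$-estimate.
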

\begin{proof}
Let $\epsilon > 0$.
Then, there exists $N \beposint$ such that $d(\conjugacy{g_k}{\psi_k}, f) < \epsilon / 2$ for all $k > N$.
Assume $k > N$.
Let $\delta > 0$ be such that if $d(y,y') < \delta$, then $d(\psi_k(y),\psi_k(y')) < \epsilon / 2$.
Because $(Z,h) \kinsetu (Y_k,g_k)$, there exists a homeomorphism $\psi' : Z \to Y_k$ such that $d(\conjugacy{h}{\psi'},g_k) < \delta$.
Therefore, we find that $d(\conjugacy{h}{(\psi_k \circ \psi')},f) < d( \psi_k \circ (\psi' \circ h \circ \psi'^{-1}) \circ \psi_k^{-1},\conjugacy{g_k}{\psi_k}) + d(\conjugacy{g_k}{\psi_k},f) < \epsilon$.
\end{proof}
For a topological dynamical system $(X,f)$, we define
\[ \Per(X,f) := \{n \beposint ~|~ f^n(x) = x \myforsome x \in X \}. \]
Let $(X,f)$ and $(Y,g)$ be topological dynamical systems.
Suppose that $(Y,g) \kinsetu (X,f)$.
Then, for each $n \beposint$, $(Y,g^n) \kinsetu (X,f^n)$.
Consider a sequence of homeomorphisms $\{\psi_k\}_{k = 1,2,\dots}$ from $Y$ onto $X$ such that $\conjugacy{g}{\psi_k} \to f$ as $k \to \infty$.
Then, for each $n \beposint$, the fixed points of $\conjugacy{g^n}{\psi_k}$ approach some of the fixed points of $f^n$.
Thus, we obtain $\Per(Y,g) \subset \Per(X,f)$.
We write $\percon{Y,g}{X,f}$ if $\Per(Y,g) \subset \Per(X,f)$.
Thus, we obtain the following:
\begin{lem}\label{lem:percon}
Let $(X,f)$ and $(Y,g)$ be topological dynamical systems.
If $(Y,g) \kinsetu (X,f)$, then $\percon{Y,g}{X,f}$.
\end{lem}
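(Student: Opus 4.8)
The plan is to reduce the statement to the behaviour of fixed points under the powers $f^n$ and $g^n$, and then to exploit the compactness of $X$ to manufacture an honest fixed point of $f^n$ as a limit of approximate ones. Fix a witnessing sequence of homeomorphisms $\{\psi_k\}_{k=1,2,\dots}$ from $Y$ onto $X$ with $\conjugacy{g}{\psi_k} \to f$, and let $n \in \Per(Y,g)$, so that there is a point $y_0 \in Y$ with $g^n(y_0) = y_0$. The goal is to show $n \in \Per(X,f)$, for then $\Per(Y,g) \subset \Per(X,f)$, i.e.\ $\percon{Y,g}{X,f}$.

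First I would record the elementary identity $\conjugacy{g^n}{\psi_k} = (\conjugacy{g}{\psi_k})^n$, valid because conjugation by the homeomorphism $\psi_k$ respects composition, the intermediate factors $\psi_k^{-1} \circ \psi_k$ cancelling. Then I would verify that $(\conjugacy{g}{\psi_k})^n \to f^n$ in the uniform metric, i.e.\ that $(Y,g^n) \kinsetu (X,f^n)$. This is proved by induction on $n$: writing $h_k := \conjugacy{g}{\psi_k}$, one has the bound $d(h_k^n, f^n) \le d(h_k \circ h_k^{n-1}, f \circ h_k^{n-1}) + d(f \circ h_k^{n-1}, f \circ f^{n-1})$, where the first summand is at most $d(h_k,f)$ and the second tends to $0$ by the uniform continuity of $f$ on the compact space $X$ together with the inductive hypothesis $d(h_k^{n-1}, f^{n-1}) \to 0$.

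Next I would transport the fixed point $y_0$ through the conjugacies. Setting $x_k := \psi_k(y_0)$, one computes $\conjugacy{g^n}{\psi_k}(x_k) = \psi_k(g^n(y_0)) = \psi_k(y_0) = x_k$, so each $x_k$ is a genuine fixed point of $\conjugacy{g^n}{\psi_k}$. Because $X$ is compact, the sequence $\{x_k\}$ admits a convergent subsequence $x_{k_j} \to x_* \in X$. A four-term triangle inequality then forces $f^n(x_*) = x_*$: the quantity $d(f^n(x_*), x_*)$ is bounded by $d(f^n(x_*), f^n(x_{k_j})) + d(f^n, \conjugacy{g^n}{\psi_{k_j}}) + d(\conjugacy{g^n}{\psi_{k_j}}(x_{k_j}), x_{k_j}) + d(x_{k_j}, x_*)$, in which the first term vanishes in the limit by continuity of $f^n$, the second by the convergence just established, the third is identically $0$, and the fourth vanishes by the choice of $x_*$. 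Hence $n \in \Per(X,f)$, as required.

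The step I expect to be the main obstacle is precisely this passage from the approximate fixed points $x_k$ to a true fixed point of $f^n$. The heuristic phrase that the fixed points ``approach some of the fixed points of $f^n$'' conceals the fact that the $x_k$ need not converge on the nose; the honest argument is the compactness extraction of a convergent subsequence, after which the uniform convergence of the conjugates together with the continuity of $f^n$ pins the limit down as a fixed point. The only other point requiring care is the preservation of uniform convergence under taking the $n$-th power, which rests essentially on the uniform continuity of $f$ afforded by compactness.
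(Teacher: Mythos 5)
Your proof is correct and follows essentially the same route as the paper, which argues (in the paragraph preceding the lemma) that $(Y,g^n)\kinsetu(X,f^n)$ and that the fixed points of $\conjugacy{g^n}{\psi_k}$ approach fixed points of $f^n$. You have merely supplied the details the paper leaves implicit, namely the inductive uniform-continuity estimate for the $n$-th powers and the compactness extraction of a convergent subsequence of approximate fixed points.
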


%
%Cantor Set
Let $C$ be the Cantor set in the interval $[0,1]$.
A compact metrizable totally disconnected perfect space is homeomorphic to $C$.
Therefore, any non-empty open and closed subset of $C$ is homeomorphic to $C$.
%
%For two maps $f,g : C \to C$, a metric of uniform convergence $d(f,g)$ is defined by $d(f,g) = \max_{x \in C} |f(x)-g(x)|$. By compactness of $C$, it is well defined.
%
%full shift
Let $V= \{v_1, v_2, \dots, v_n \}$ be a finite set of $n$ symbols with discrete topology. Let $\Sigma(V) := V^{\Z}$ with the product topology.
Then, $\Sigma(V)$ is a compact metrizable totally disconnected perfect space, and hence, it is homeomorphic to $C$. We define a homeomorphism $\sigma : \Sigma(V) \to \Sigma(V)$ as follows:
\[ \sigma(x)_{i} = x_{i+1}~\text{for all}~i \in \Z. \]
The pair $(\Sigma(V),\sigma)$ is called a {\it \tfullshift} of $n$ symbols.
%
%words of length k
%
%$V$ is called a set of {\it alphabets} and a finite sequence $w = a_0 a_1 \dots a_{k-1}$ of alphabets is called a {\it word} of length $k$.
%The set of all words of length $k$ of alphabets $V$ is denoted by $W(k,V)$.
%A word $w = a_0 a_1 \dots a_{k-1} \in W(k,V)$ may be considered as a map $w : \{0,1,\dots,k-1\} \to V$ such that $w(i) = a_i~(0 \leq i < k)$.
%
%subshift
If a closed set $\Lambda \subset \Sigma(V)$ is invariant under $\sigma$, i.e. $\sigma(\Lambda) = \Lambda$, then $(\Lambda,\sigma|_{\Lambda})$ is called a {\it \tsubshift}.
In this manuscript, $\sigma|_{\Lambda}$ is abbreviated to $\sigma$.
% and the topological dynamical system $(\Lambda,\sigma)$ is abbreviated to $\Lambda$.
%
%For \tsubshifts $(\Lambda,\sigma)$ and $(\Lambda',\sigma)$, $(\Lambda',\sigma) \kinsetu (\Lambda,\sigma)$ is often abbreviated to $\Lambda' \kinsetu \Lambda$.
%
%Cylinder for subshift
%For some fixed set of words $W$ of length $k$, We put a closed subset $\Sigma(V,W) \subset \Sigma(V)$ as follows:
%\begin{align*} \Sigma(V,W) = \{x \in \Sigma(V) ~|~
% \text{for all}\ m \in \Z
% & \ \text{there is a word } \ w \in W \ \text{such that} \\
% & \quad x_{m+i} = w(i) \myforall i = 0,1,\dots,k-1 \}.
% \end{align*}
%Then $(\Sigma(V,W), \sigma |_{\Sigma(V,W)})$ is a subshift and such subshift is said to be of {\it finite type}.
%$\Sigma(V,W)$ may not be homeomorphic to $C$.
%definition of directed graph
A directed graph $G$ is a pair $(V,E)$ of a finite set $V$ of vertices and a set of directed edges $E \subset V \times V$.
%Subshift determined by graph
Let $G = (V,E)$ be a directed graph.
$\Sigma(G)$ denotes the \tsubshift\ defined as follows:
\[ \Sigma(G) := \{ x \in V^{\Z} ~|~ (x_i, x_{i+1}) \in E \myforall i \in \Z \}. \]
A \tsubshift\ is said to be of {\it finite type} if it is topologically conjugate to $(\Sigma(G),\sigma)$ for some directed graph $G$.
Throughout this manuscript, unless otherwise stated, we assume that all the vertices appear in some element of $\Sigma(G)$, i.e. all the vertices of $G$ have both at least one indegree and at least one outdegree.
We define a set of words of length $k$ in $\Sigma(G)$ as follows:
\[ W(k,G) := \{ w_0 w_1 \cdots w_{k-1} \in V^{\{0,1,\dots,k-1\}}~|~ (w_i,w_{i+1}) \in E \myforall i = 0,1,\dots,k-2\}. \]
For a word $w = a_0 a_1 \dots a_{k-1}$ of length $k$ and an integer $m$, we define a subset $C_m(w) \subset \Sigma(G)$ as follows:
\[ C_m(w) =  \{ x \in \Sigma(G) ~|~ x_{m+i} = a_i \myforall i = 0,1,\dots,k-1 \}. \]
Such a set is called a {\it cylinder}.
Because $C_m(w)$ is an open and closed subset of $\Sigma(G)$, if $\Sigma(G)$ is homeomorphic to $C$ and if $C_m(w)$ is not empty, then $C_m(w)$ is also homeomorphic to $C$.
A word $a_0a_1\dots a_{k-1} \in W(k,G)$ is also called a {\it path} of length $k-1$ from $a_0$ to $a_{k-1}$ in $G$.
Let $x$ be an element of some \tsubshift. Let $i \leq j$ be integers.
Then, a word $x_{i} \dots x_{j}$ is also called a {\it segment} of length $j-i+1$.
\begin{lem}[{\rm Lemma 1.3 of R.~Bowen \cite{Bowen}}]\label{lem:bowen}
Let $G = (V,E)$ be a directed graph.
Suppose that every vertex of $V$ has both at least one outdegree and at least one indegree.
Then, $\Sigma(G)$ is topologically mixing if and only if there exists an $N \beposint$ such that for any pair of vertices $u$ and $v$ of $V$, there exists a path from $u$ to $v$ of length $n > N$.
\end{lem}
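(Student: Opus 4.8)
The plan is to convert the dynamical mixing condition into the combinatorial path condition by exploiting the cylinder structure of $\Sigma(G)$, reading the right-hand side in the natural way that makes the equivalence correct: there is an $N \beposint$ such that for every ordered pair $(u,v)$ of vertices and \emph{every} $n > N$ there is a path of length $n$ from $u$ to $v$. The starting point is that every non-empty open subset of $\Sigma(G)$ contains a cylinder $C_0(w)$ for some $w \in W(k,G)$, so it suffices to understand the overlaps $\sigma^n(C_0(w)) \cap C_0(w')$. First I would establish the key dictionary: writing $w = a_0 a_1 \cdots a_{k-1}$ and $w' = b_0 b_1 \cdots b_{l-1}$, for every $n \geq k-1$ one has $\sigma^n(C_0(w)) \cap C_0(w') \notkuu$ if and only if there is a path of length $n-(k-1)$ from $a_{k-1}$ to $b_0$ in $G$. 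The forward direction is immediate, since an intersection point $x$ supplies the segment $x_{k-1} \cdots x_{n}$ as the desired path; the reverse direction is where the standing hypothesis that every vertex has at least one indegree and one outdegree enters, because it lets any finite path be extended to a genuine bi-infinite element of $\Sigma(G)$ realizing the intersection.

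For the implication from the path condition to mixing, I would take arbitrary non-empty open sets $U,V$, fix cylinders $C_0(w) \subseteq U$ and $C_0(w') \subseteq V$ with $w$ of length $k$, and apply the dictionary: as soon as $n - (k-1) > N$, i.e. $n > N + k - 1$, the path condition furnishes a path of length $n-(k-1)$ from $a_{k-1}$ to $b_0$, whence $\sigma^n(U) \cap V \supseteq \sigma^n(C_0(w)) \cap C_0(w') \notkuu$. This is exactly topological mixing, with threshold $N + k - 1$ depending only on the chosen cylinders.

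For the converse, I would apply mixing to the single-symbol cylinders $C_0(u)$ and $C_0(v)$, which are non-empty by the standing assumption that every vertex appears in some element of $\Sigma(G)$. Mixing yields for each ordered pair $(u,v)$ an integer $N_{u,v}$ such that $\sigma^n(C_0(u)) \cap C_0(v) \notkuu$ for all $n > N_{u,v}$; since here $k=1$, the dictionary identifies each such intersection point with a path of length $n$ from $u$ to $v$. Setting $N := \max_{u,v} N_{u,v}$, which is finite because $V$ is a finite set, then produces a single $N$ valid uniformly over all pairs and all $n > N$.

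The main obstacle I anticipate is not a deep step but the careful bookkeeping in the dictionary of the first paragraph: the length convention under which a word of length $k$ is a path of length $k-1$ introduces the shift $n \mapsto n-(k-1)$ that must be tracked consistently, and the reverse half of that equivalence quietly relies on the degree hypothesis to promote finite paths to points of $\Sigma(G)$. Once the equivalence is correctly in place, both directions reduce to routine quantifier manipulation, with the finiteness of $V$ being the only ingredient needed to pass from the pair-dependent thresholds to the uniform $N$ demanded by the statement.
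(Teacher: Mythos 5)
Your proof is correct, but note that the paper does not actually prove this lemma at all: its ``proof'' is a one-line citation to Lemma 1.3 of Bowen's lecture notes. What you have written is a complete, self-contained version of the standard argument that the citation points to, organized around the dictionary between non-emptiness of $\sigma^n(C_0(w)) \cap C_0(w')$ and existence of a path of length $n-(k-1)$ from the last symbol of $w$ to the first symbol of $w'$. Two points in your write-up deserve explicit credit. First, you correctly resolve the quantifier ambiguity in the statement as printed (``there exists a path of length $n > N$'' must be read as ``for every $n > N$ there exists a path of length $n$''; the literal existential reading would be far too weak to imply mixing). Second, you isolate exactly where the indegree/outdegree hypothesis is used, namely in promoting a finite concatenated path to a bi-infinite element of $\Sigma(G)$ so that the cylinder intersection is genuinely non-empty; this is the only non-routine step, and it is also why the paper insists on that hypothesis in the lemma's statement. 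The passage from pair-dependent thresholds $N_{u,v}$ to a uniform $N$ via finiteness of $V$, and the reduction of arbitrary open sets to cylinders, are handled correctly. In short: the proposal supplies a valid proof of a result the paper only quotes, and it is the same proof one finds in the cited source.
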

\begin{proof}See Lemma 1.3 of R.~Bowen \cite{Bowen}.
\end{proof}
Let $f : X \to X$ be a mapping and $\U$, a covering of $X$.
For the sake of conciseness, we define a directed graph $G_{f,\U} = (V_{f,\U},E_{f,\U}$) as follows:
\[ V_{f,\U} = \U~\text{and} \]
\[(a_0, a_1)~\in~E_{f,\U}~\myif f(a_0) \cap a_1 \neq \kuu. \]
Note that if $\kuu \notin \U$, then all the vertices have at least one outdegree.
In addition, if $f$ is an onto mapping, then all the vertices have at least one indegree. %mesh of covering
Let $(X,d)$ be a compact metric space and $K \subset X$.
The diameter of $K$ is defined by $\diam (K) := \sup \{d(x,y) ~|~ x,y \in K\}$.
For a finite covering $\U$ of $X$, we define $\mesh(\U) :=  \max \{ \diam(U) ~|~ U \in \U \}$.
% delta(f,e) 'ÌNotation
\begin{lem}\label{lem:cont} Let $(X,d)$ be a compact metric space and $f : X \to X$, a continuous mapping.
Then, for any $\epsilon > 0$, there exists $\delta = \delta(f,\epsilon) > 0$ such that
\[\delta < \frac{\epsilon}{2};\] 
\[\text{if}~d(x,y) \leq \delta,~\text{then}~d(f(x),f(y)) < \frac{\epsilon}{2}\ \ \text{for all}\ \ x,y \in X.\]
\end{lem}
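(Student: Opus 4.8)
The statement is simply a convenient packaging of the uniform continuity of $f$, so the plan is to invoke the Heine--Cantor theorem and then shrink the resulting constant to meet the extra numerical constraint $\delta < \epsilon/2$. First I would recall that a continuous mapping from a compact metric space into a metric space is automatically uniformly continuous. Applying this to our $f$ and to the target accuracy $\epsilon/2$, I obtain a constant $\delta_0 > 0$ with the property that $d(x,y) \leq \delta_0$ implies $d(f(x),f(y)) < \epsilon/2$ for all $x,y \in X$.

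Next, to satisfy both requirements simultaneously, I set $\delta := \min\{\delta_0,\ \epsilon/4\}$. This $\delta$ is positive, obeys $\delta \leq \epsilon/4 < \epsilon/2$, and whenever $d(x,y) \leq \delta \leq \delta_0$ the uniform-continuity estimate gives $d(f(x),f(y)) < \epsilon/2$. Both asserted properties therefore hold, which finishes the argument.

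There is no genuine obstacle in this lemma; the only point requiring care is to pass to the minimum rather than to use $\delta_0$ itself, so that the bound $\delta < \epsilon/2$ is also respected. Should one wish to avoid citing Heine--Cantor explicitly, the constant $\delta_0$ can be produced directly by a standard compactness argument: for each $x \in X$ choose an open ball on which $f$ oscillates by less than $\epsilon/2$, extract a finite subcover by compactness of $X$, and take a Lebesgue number of that cover; but invoking uniform continuity is the cleanest route and is all that the later sections will need.
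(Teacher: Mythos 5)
Your proposal is correct and follows essentially the same route as the paper, which simply observes that the lemma is a direct consequence of the uniform continuity of $f$ on the compact space $X$. You merely spell out the (entirely routine) step of shrinking $\delta_0$ to enforce the bound $\delta < \epsilon/2$, which the paper leaves implicit.
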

\begin{proof}
This lemma directly follows from the uniform continuity of $f$.
\end{proof}
For two directed graphs $G=(V,E)$ and $G'=(V',E')$, $G$ is said to be a {\it subgraph} of $G'$ if $V \subseteq V'$ and $E \subseteq E'$.
\begin{lem}\label{lem:contgraph}
Let $(X,d)$ be a compact metric space, $f : X \to X$ be a continuous mapping, and $\epsilon > 0$. Let $\delta = \delta(f,\epsilon)$ be as in lemma \ref{lem:cont} and $\U$, a finite covering of X such that $\mesh(\U) < \delta$.
Let $g~:~X~\to~X$ be a mapping such that $G_{g,\U}$ is a subgraph of $G_{f,\U}$.
Then, $d(f,g) < \epsilon$.
\end{lem}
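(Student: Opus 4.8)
The plan is to verify the bound pointwise, with a constant that is uniform in the point. So I would fix an arbitrary $x \in X$ and aim to estimate $d(f(x),g(x))$. Because $\U$ is a covering of $X$, I can choose $a_0 \in \U$ with $x \in a_0$, and also choose $a_1 \in \U$ with $g(x) \in a_1$. The key observation is that this pair of cells records an edge of $G_{g,\U}$: since $x \in a_0$ we have $g(x) \in g(a_0)$, and since also $g(x) \in a_1$, the set $g(a_0) \cap a_1$ is nonempty, so $(a_0,a_1) \in E_{g,\U}$.

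Next I would transfer this edge to $f$ using the subgraph hypothesis. As $G_{g,\U}$ is a subgraph of $G_{f,\U}$, their vertex sets coincide (both equal $\U$) and $E_{g,\U} \subseteq E_{f,\U}$; hence the edge $(a_0,a_1)$ of $G_{g,\U}$ is also an edge of $G_{f,\U}$, which by definition means $f(a_0) \cap a_1 \neq \emptyset$. Therefore there is a point $y \in a_0$ with $f(y) \in a_1$. This $y$ is the bridge between $f$ and $g$: it sits in the same cell $a_0$ as $x$, while its image sits in the same cell $a_1$ as $g(x)$.

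The estimate then comes from two applications of $\mesh(\U) < \delta$ together with Lemma \ref{lem:cont}. Since $x,y \in a_0$ we have $d(x,y) \le \diam(a_0) \le \mesh(\U) < \delta$, so Lemma \ref{lem:cont} gives $d(f(x),f(y)) < \epsilon/2$. Since $f(y), g(x) \in a_1$ we have $d(f(y),g(x)) \le \diam(a_1) \le \mesh(\U) < \delta$. The triangle inequality then yields $d(f(x),g(x)) < \epsilon/2 + \delta$.

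Finally I would invoke the remaining clause of Lemma \ref{lem:cont}, namely $\delta < \epsilon/2$, to close the estimate: $d(f(x),g(x)) < \epsilon/2 + \delta < \epsilon$. Because the bound $\epsilon/2 + \delta$ is a fixed constant independent of $x$, taking the supremum over $x \in X$ gives $d(f,g) \le \epsilon/2 + \delta < \epsilon$. This step is the only genuinely non-automatic point, and it is exactly why Lemma \ref{lem:cont} was stated with the extra requirement $\delta < \epsilon/2$ rather than merely $\delta < \epsilon$: the built-in slack guarantees a uniform gap below $\epsilon$, so the passage from the pointwise strict inequality to a strict inequality for the supremum needs no appeal to compactness or to attainment of the supremum.
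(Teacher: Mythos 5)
Your proof is correct and follows essentially the same route as the paper's: pick the cells containing $x$ and $g(x)$, use the subgraph hypothesis to produce $y$ in the first cell with $f(y)$ in the second, and conclude by the triangle inequality together with the two properties of $\delta$ from Lemma \ref{lem:cont}. Your explicit remark about the uniformity of the bound $\epsilon/2+\delta$ when passing to the supremum is a small point the paper leaves implicit, but the argument is the same.
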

\begin{proof}
Let $x \in X$. Then, $x \in U$ and $g(x) \in U'$ for some $U,U' \in \U$. Because $G_{g,\U}$ is a subgraph of $G_{f,\U}$, there exists a $y \in U$ such that $f(y) \in U'$. Therefore, from lemma \ref{lem:cont}, it follows that
\[d(f(x),g(x)) \leq d(f(x),f(y)) + d(f(y),g(x)) < \frac{\epsilon}{2} + \diam(U') < \epsilon. \]
\end{proof}
From this lemma, we obtain the following:
\begin{lem}\label{lem:kinji}
Let $(X,d)$ be a compact metric space; $f : X \to X$, a continuous mapping; and $\{\U_k\}_{k = 1,2, \dots}$, a sequence of coverings of $X$ such that $\mesh(\U_k) \to 0$ as $k \to \infty$.
Let $\{g_k\}_{k=1,2,\dots}$ be a sequence of mappings from $X$ to $X$ such that $G_{g_k,\U_k}$ is a subgraph of $G_{f,\U_k}$ for all $k$.
Then, $g_k \to f$ as $k \to \infty$. 
\end{lem}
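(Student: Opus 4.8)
The plan is to reduce everything to Lemma~\ref{lem:contgraph}, which already converts a subgraph relation between the combinatorial graphs $G_{g,\U}$ and $G_{f,\U}$ into a quantitative bound $d(f,g) < \epsilon$, provided the mesh of $\U$ is smaller than the modulus of uniform continuity $\delta(f,\epsilon)$ supplied by Lemma~\ref{lem:cont}. Since convergence $g_k \to f$ in $\End(X)$ means $d(f,g_k) \to 0$, it suffices to show that for every $\epsilon > 0$ there is an index beyond which $d(f,g_k) < \epsilon$.

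First I would fix $\epsilon > 0$ and invoke Lemma~\ref{lem:cont} to obtain $\delta = \delta(f,\epsilon) > 0$. The crucial point, which makes the argument uniform in $k$, is that this $\delta$ depends only on $f$ and $\epsilon$ and not on any of the coverings $\U_k$. Next, since $\mesh(\U_k) \to 0$ as $k \to \infty$, there exists $N \beposint$ such that $\mesh(\U_k) < \delta$ for all $k > N$.

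Then for each such $k > N$ the covering $\U_k$ satisfies the hypothesis $\mesh(\U_k) < \delta$ of Lemma~\ref{lem:contgraph}, and by assumption $G_{g_k,\U_k}$ is a subgraph of $G_{f,\U_k}$. Applying Lemma~\ref{lem:contgraph} with $\U = \U_k$ and $g = g_k$ therefore yields $d(f,g_k) < \epsilon$ for all $k > N$. As $\epsilon$ was arbitrary, this is precisely the statement that $g_k \to f$ as $k \to \infty$.

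There is no genuine obstacle here: the entire analytic content, namely the passage from a purely combinatorial containment of edge sets to a uniform estimate on $\sup_x d(f(x),g(x))$, has already been discharged in Lemma~\ref{lem:contgraph}. The only point requiring care is that the modulus $\delta$ is chosen once from $f$ and $\epsilon$ before selecting $N$, so that a single threshold handles all large $k$ simultaneously; this is guaranteed because $\delta(f,\epsilon)$ in Lemma~\ref{lem:cont} is independent of the covering.
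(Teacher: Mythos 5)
Your proof is correct and follows exactly the route the paper intends: the paper introduces Lemma~\ref{lem:kinji} with the words ``From this lemma, we obtain the following,'' leaving precisely the $\epsilon$--$\delta$--$N$ bookkeeping you spelled out. Nothing further is needed.
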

%partition
A covering $\U$ of $X$ is called a partition if $U \cap U' = \kuu$ for all $U,U' \in \U$, where $U \neq U'$.
The Cantor set has a partition by open and closed subsets of an arbitrarily small mesh.
\begin{lem}\label{lem:mix-Cantor}
Let $G = (V,E)$ be a directed graph.
Suppose that every vertex of $G$ has both at least one outdegree and at least one indegree.
Suppose that $\Sigma(G)$ is topologically mixing and that $\Sigma(G)$ is not a single point.
Then, $\Sigma(G)$ is homeomorphic to $C$.
%Also if $\Sigma^+(G)$ is topologically mixing which is not a single point, then $\Sigma^+(G)$ is homeomorphic to $C$.
\end{lem}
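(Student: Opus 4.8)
The plan is to check that $\Sigma(G)$ meets the topological characterization of the Cantor set recalled above: that it is a nonempty compact metrizable totally disconnected perfect space. Three of these properties are immediate. By definition $\Sigma(G)$ is a closed (shift-invariant) subset of the full shift $\Sigma(V) = V^{\Z}$, and $\Sigma(V)$ is compact, metrizable, and totally disconnected; compactness passes to closed subspaces while metrizability and total disconnectedness pass to arbitrary subspaces, so $\Sigma(G)$ inherits all three. It is also nonempty, since under our standing assumption every vertex occurs in some element of $\Sigma(G)$. Thus the whole task reduces to showing that $\Sigma(G)$ is \emph{perfect}, i.e.\ that it has no isolated point.

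First I would translate ``isolated point'' into graph-combinatorial language. The cylinders $C_m(w)$ form a basis for the topology, so if $x \in \Sigma(G)$ were isolated, some cylinder $C_m(w)$ with $w = x_m x_{m+1} \cdots x_{m+k-1}$ would equal $\{x\}$. Since membership in $\Sigma(G)$ is an edge-by-edge condition, a point lies in $C_m(w)$ precisely when it extends the path $w$, and such an extension is obtained by independently choosing a backward infinite path into $x_m$ and a forward infinite path out of $v := x_{m+k-1}$; both exist because every vertex has positive in- and out-degree. Hence $C_m(w) = \{x\}$ forces, in particular, that the forward infinite path issuing from $v$ is \emph{unique}.

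The crux---and the step I expect to be the main obstacle---is to show that such a unique forward extension is incompatible with mixing unless $\Sigma(G)$ is a single point. Uniqueness forces every vertex along the forward path $v = v_0 \to v_1 \to v_2 \to \cdots$ to have out-degree exactly $1$ (a second out-edge would extend, by positivity of out-degrees, to a second forward path). As $V$ is finite this trajectory is eventually periodic and visits only a finite vertex set $R = \{v_0, v_1, \dots\}$, which is exactly the set of vertices reachable from $v$. By Lemma \ref{lem:bowen}, mixing supplies an $N$ such that any two vertices are joined by a path of every length exceeding $N$; applied to $v$ and an arbitrary $u \in V$ this exhibits $u$ as the endpoint of a forward path out of $v$, forcing $u \in R$ and hence $V = R$. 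Consequently every vertex has out-degree $1$, so $|E| = |V|$, and since every in-degree is at least $1$ this forces every in-degree to equal $1$ as well; a finite graph with all in- and out-degrees equal to $1$ consisting of a single forward trajectory is a single cycle. A cycle of length $p \ge 2$, however, again contradicts the $N$ of Lemma \ref{lem:bowen}: from $v$ a path of a fixed length $n > N$ can reach only one vertex, not all of them. Therefore $G$ is a single loop and $\Sigma(G)$ is a single point, contradicting the hypothesis. This contradiction shows $\Sigma(G)$ has no isolated point, so $\Sigma(G)$ is perfect and, by the characterization above, homeomorphic to $C$.
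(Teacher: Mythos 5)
Your proof is correct and takes essentially the same route as the paper: reduce the claim to showing that $\Sigma(G)$ is perfect (the other properties of the Cantor set characterization being inherited from $V^{\Z}$) and use the uniform transitivity constant $N$ from Lemma \ref{lem:bowen} to rule out isolated points. The paper dismisses this last step as ``easy to check''; you have simply supplied the details, via the structural observation that an isolated point would force $G$ to be a single cycle and hence $\Sigma(G)$ to be a single point.
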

\begin{proof}
Suppose that $\Sigma(G)$ is topologically mixing.
Then, by lemma \ref{lem:bowen}, there exists an $N \beposint$ such that for any pair $u$ and $v$ of vertices of $G$, there exists a path from $u$ to $v$ of length $n$ for all $n > N$.
Then, it is easy to check that every point $x \in \Sigma(G)$ is not isolated.
Hence, $\Sigma(G)$ is homeomorphic to $C$.
%The same argument holds for $\Sigma^+(G)$.
\end{proof}
\section{Proof of the main result}
In this section, we prove certain lemmas and propositions in order to prove the main result.
For a mapping $\pi : Y \to X$ and a covering $\U$ of X, the covering $\{\invpi(U) ~|~ U \in \U\}$ is denoted by $\invpi(\U)$.
For any mapping $g : Y \to Y$, we define a directed graph $G_{g,\pi,\U} = (V,E)$ as follows:
\[ V = \U ;\]
\[ E = \{(a_0,a_1) \in \U \times \U ~|~ \pi(g(\invpi(a_0))) \cap a_1 \neq \kuu \}. \]
A vertex $a$ in $G_{g,\pi,\U}$ has at least one outdegree if $\invpi(a) \neq \kuu$.
\begin{lem}\label{lem:subgraph}
Let $X$ and $Y$ be homeomorphic to $C$.
Let $f : X \to X$ be a continuous mapping; $g: Y \to Y$, a mapping; and $\U_k$, a sequence of finite partitions of $X$ by non-empty open and closed subsets such that $\mesh(\U_k) \to 0$ as $k \to \infty$.
Suppose that there exists a sequence $\pi_k~(k=1,2,\dots)$ of continuous mappings from $Y$ to $X$ such that $\pi_k(Y) \cap U \neq \kuu$ for all $U \in \U_k$ and that the directed graph $G_{g,\pi_k,\U_k}$ is a subgraph of $G_{f,\U_k}$ for all $k \beposint$.
Then, there exists a sequence $\psi_k~(k=1,2,\dots)$ of homeomorphisms from $Y$ onto $X$ such that $\conjugacy{g}{\psi_k} \to f$ as $k \to \infty$.
\end{lem}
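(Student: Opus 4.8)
The plan is to build each $\psi_k$ directly out of $\pi_k$ so that the two maps induce the same partition of $Y$, and then to observe that the directed graph attached to the conjugate $\conjugacy{g}{\psi_k}$ coincides with $G_{g,\pi_k,\U_k}$; lemma \ref{lem:kinji} then delivers the conclusion.

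First I would fix $k$ and examine the family $\{\pi_k^{-1}(U) \mid U \in \U_k\}$. Because $\U_k$ is a finite partition of $X$ into non-empty clopen sets and $\pi_k$ is continuous, these preimages form a finite partition of $Y$ into clopen sets, and each one is non-empty precisely because $\pi_k(Y) \cap U \notkuu$. Both $U$ and $\pi_k^{-1}(U)$ are non-empty clopen subsets of spaces homeomorphic to $C$, hence each is itself homeomorphic to $C$, so for every $U \in \U_k$ there is a homeomorphism from $\pi_k^{-1}(U)$ onto $U$. Gluing these finitely many homeomorphisms across the two matching clopen partitions produces a homeomorphism $\psi_k : Y \to X$ enjoying the key property $\psi_k^{-1}(U) = \pi_k^{-1}(U)$ for every $U \in \U_k$.

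Writing $h_k := \conjugacy{g}{\psi_k}$, the heart of the argument is to check that $G_{h_k,\U_k}$ is a subgraph of $G_{f,\U_k}$. The identity $\psi_k^{-1}(U) = \pi_k^{-1}(U)$ means that for every $z \in Y$ and every $U \in \U_k$ one has $\psi_k(z) \in U$ if and only if $\pi_k(z) \in U$. Since $\psi_k^{-1}(a_0) = \pi_k^{-1}(a_0)$ gives $h_k(a_0) = \psi_k(g(\pi_k^{-1}(a_0)))$, applying this membership equivalence at the points $g(y)$ with $y \in \pi_k^{-1}(a_0)$ shows that $h_k(a_0) \cap a_1 \notkuu$ holds precisely when $\pi_k(g(\pi_k^{-1}(a_0))) \cap a_1 \notkuu$. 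Thus $G_{h_k,\U_k}$ and $G_{g,\pi_k,\U_k}$ have the same edge set, and since the latter is a subgraph of $G_{f,\U_k}$ by hypothesis, so is the former.

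Finally, as $\mesh(\U_k) \to 0$ and $G_{h_k,\U_k}$ is a subgraph of $G_{f,\U_k}$ for every $k$, lemma \ref{lem:kinji} yields $h_k = \conjugacy{g}{\psi_k} \to f$ as $k \to \infty$, which is exactly the assertion. I expect the only genuinely delicate point to be the gluing in the second paragraph: one must confirm that matching the two finite clopen partitions cell-by-cell really assembles into a global homeomorphism. This is where clopenness of the cells is used — a map that is continuous on each cell of a finite clopen partition is continuous on the whole space, and likewise for its inverse — and where the hypothesis $\pi_k(Y) \cap U \notkuu$ is essential, since an empty cell would leave some $U \in \U_k$ unmatched and obstruct the construction.
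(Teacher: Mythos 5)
Your proposal is correct and follows essentially the same route as the paper: build $\psi_k$ by gluing homeomorphisms $\pi_k^{-1}(U)\to U$ across the matching clopen partitions, observe that $G_{\conjugacy{g}{\psi_k},\U_k}=G_{g,\pi_k,\U_k}$ is a subgraph of $G_{f,\U_k}$, and invoke Lemma \ref{lem:kinji}. The extra detail you supply on the gluing and on the edge-set identification is exactly what the paper leaves implicit.
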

\begin{proof}
Let $k \beposint$ be fixed. By assumption, for each $U \in \U_k$, $\invpi_k(U)$ is a non-empty open and closed subset of $Y$.
Therefore, there exists a homeomorphism $\psi_k : Y \to X$ such that $\psi_k(\invpi_k(U)) = U$ for each $U \in \U_k$.
By construction, $G_{\conjugacy{g}{\psi_k},\U_k}~=~G_{g,\pi_k,\U_k}$.
By assumption, $G_{\conjugacy{g}{\psi_k},\U_k}$ is a subgraph of $G_{f,\U_k}$.
Therefore, the conclusion follows from lemma \ref{lem:kinji}.
\end{proof}
\begin{lem}\label{lem:factorinto}
Let $X$ and $Y$ be homeomorphic to $C$.
Let $f : X \to X$ be a continuous mapping; $g: Y \to Y$, a mapping; and $\U_k$, a sequence of finite partitions of $X$ by non-empty open and closed subsets such that $\mesh(\U_k) \to 0$ as $k \to \infty$.
Suppose that there exists a sequence of continuous mappings $\pi_k : Y \to X$ such that $\commute{f}{\pi_k}{g}$ and that $\pi_k(Y) \cap U \notkuu$ for all $U \in \U_k$.
Then, there exists a sequence $\psi_k~(k=1,2,\dots)$ of homeomorphisms from $Y$ onto $X$ such that $\conjugacy{g}{\psi_k}$ converges uniformly to $f$.
\end{lem}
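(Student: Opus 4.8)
The plan is to reduce the statement directly to Lemma~\ref{lem:subgraph}. That lemma already manufactures the required sequence of homeomorphisms $\psi_k : Y \to X$ with $\conjugacy{g}{\psi_k} \to f$, and it does so from exactly two hypotheses: that $\pi_k(Y) \cap U \notkuu$ for every $U \in \U_k$, and that the directed graph $G_{g,\pi_k,\U_k}$ is a subgraph of $G_{f,\U_k}$ for every $k$. The first of these is assumed outright in the present lemma, and the standing hypotheses (both spaces homeomorphic to $C$, $f$ continuous, $\U_k$ partitions by non-empty open and closed sets with $\mesh(\U_k) \to 0$) are identical. Hence the only new work is to convert the intertwining relation $f \circ \pi_k = \pi_k \circ g$ into the subgraph condition; once that is done, Lemma~\ref{lem:subgraph} finishes the proof with no further effort.

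So fix $k \beposint$ and compare edge sets, the vertex set being $\U_k$ in both graphs. I would take an arbitrary edge $(a_0,a_1)$ of $G_{g,\pi_k,\U_k}$, which by definition means $\pi_k(g(\invpi_k(a_0))) \cap a_1 \notkuu$, and choose a witness $y \in \invpi_k(a_0)$ with $\pi_k(g(y)) \in a_1$. Setting $x := \pi_k(y)$, the membership $y \in \invpi_k(a_0)$ gives $x \in a_0$. The one substantive step is then a single application of the semiconjugacy: from $f \circ \pi_k = \pi_k \circ g$ we get $f(x) = f(\pi_k(y)) = \pi_k(g(y)) \in a_1$. Therefore $f(x) \in f(a_0) \cap a_1$, so $f(a_0) \cap a_1 \notkuu$, which is precisely the condition for $(a_0,a_1)$ to be an edge of $G_{f,\U_k}$. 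This shows $G_{g,\pi_k,\U_k}$ is a subgraph of $G_{f,\U_k}$ for every $k$, and invoking Lemma~\ref{lem:subgraph} produces the desired $\psi_k$.

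I do not expect a genuine obstacle here; the statement is essentially a corollary of Lemma~\ref{lem:subgraph}, and the only point demanding care is the bookkeeping that turns a commutation of maps into a nesting of edge sets. The reason the argument is so clean is structural: the graph $G_{g,\pi_k,\U_k}$ is built from the pushed-forward dynamics $\pi_k \circ g \circ \invpi_k$ on the image $\pi_k(Y)$, and the intertwining forces this pushforward to coincide with $f$ on that image, so that every overlap with a cover element recorded by $G_{g,\pi_k,\U_k}$ is automatically recorded by $f$ itself. One small bit of hygiene to keep in mind when writing the final version is that $\pi_k$ is a factor map, i.e. the relation to use is $f \circ \pi_k = \pi_k \circ g$ (the composition $\pi_k \circ f$ is not even defined, since $\pi_k : Y \to X$ while $f : X \to X$), so the hypothesis should be read with $f$ and $g$ in the type-correct order.
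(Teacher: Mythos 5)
Your proposal is correct and follows essentially the same route as the paper: the paper's proof likewise verifies the subgraph condition via the intertwining relation (writing it as the containment $\pi_k(g(\invpi_k(U))) = f(\pi_k(\invpi_k(U))) \subset f(U)$ rather than chasing a witness point) and then invokes Lemma~\ref{lem:subgraph}. Your side remark about reading the hypothesis as $f \circ \pi_k = \pi_k \circ g$ is also the type-correct interpretation the paper's own proof uses.
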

\begin{proof}
Let $k \beposint$ be fixed. By assumption, for each $U \in \U_k$, $\invpi_k(U)$ is a non-empty open and closed subset of $Y$.
Therefore, there exists a homeomorphism $\psi_k : Y \to X$ such that $\psi_k(\invpi_k(U)) = U$ for each $U \in \U_k$.
Because $\pi_k(g(\invpi_k(U)))) = f(\pi_k(\invpi_k(U))) \subset f(U)$, $G_{g,\pi_k,\U_k}$ is a subgraph of $G_{f,\U_k}$.
Therefore, the conclusion follows from lemma \ref{lem:subgraph}.
\end{proof}

Let $\Lambda$ be a \tsubshift\ and $x \in \Lambda$.
Then, for $k < l$, a word $x_{k}x_{k+1}\dots x_{l}$ is said to be $j$-periodic if $k \leq i < i+j \leq l$ implies $x_{i} = x_{i+j}$.
\begin{lem}[{\rm Krieger's Marker Lemma, (2.2) of M.~Boyle} \cite{Boyle}]\label{lem:Krieger}
Let $(\Lambda,\sigma)$ be a \tsubshift. Given $k > N > 1$, there exists a closed and open set $F$ such that
\enuma 
\item the sets $\sigma^l(F), 0 \leq l < N$, are disjoint, and
\item if $x \in \Lambda$ and $x_{-k}\dots x_{k}$ is not a $j$-periodic word for any $j<N$, then
\[ x \in \bigcup_{-N < l < N} \sigma^l(F)\ . \]
\enumz
\end{lem}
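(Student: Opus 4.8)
The plan is to construct $F$ by a greedy ``marker-packing'' argument, in the spirit of Krieger. Write $A$ for the set of all $x \in \Lambda$ such that $x_{-k}\cdots x_{k}$ is not $j$-periodic for any $j<N$; since membership in $A$ depends only on the coordinates $x_{-k},\dots,x_{k}$, the set $A$ is clopen. Condition (2) is precisely the requirement $A\subseteq\bigcup_{-N<l<N}\sigma^{l}(F)$, so the whole statement reduces to producing a clopen $F$ for which $\sigma^{0}(F),\dots,\sigma^{N-1}(F)$ are pairwise disjoint and $A\subseteq\bigcup_{-N<l<N}\sigma^{l}(F)$.

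First I would isolate the basic building block. For $x\in A$ let $V_{x}$ be the cylinder fixing the coordinates $x_{-k}\cdots x_{k}$. I claim $V_{x},\sigma(V_{x}),\dots,\sigma^{N-1}(V_{x})$ are pairwise disjoint. Indeed, if $y\in V_{x}\cap\sigma^{i}(V_{x})$ with $0<i<N$, then the coordinates of $y$ forced by $y\in V_{x}$ and by $\sigma^{-i}(y)\in V_{x}$ agree on the overlap $[-k,k-i]$ (nonempty because $i<N\le k$), which gives $x_{p}=x_{p+i}$ for all $p\in[-k,k-i]$, i.e. $x_{-k}\cdots x_{k}$ is $i$-periodic, contradicting $x\in A$. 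This is exactly where the hypothesis $k>N$ enters, so I would record it carefully; it shows that every $V_{x}$ is a legitimate ``marker cell.''

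Next I would build $F$ greedily. I fix a countable basis of clopen sets (equivalently, enumerate all cylinders) and process them in turn, adjoining the current set to the marker set whenever doing so keeps the first $N$ translates pairwise disjoint and discarding it otherwise. Writing $F$ for the resulting increasing union, a short argument gives $F\cap\sigma^{i}(F)=\kuu$ for $0<i<N$, so (1) is preserved. For coverage I would argue by contradiction: if $x\in A$ but $\sigma^{l}(x)\notin F$ for all $-N<l<N$, then $x$ lies in none of the sets $\sigma^{l}(F)$ with $0<|l|<N$, so a sufficiently fine markable cell $R$ around $x$ is disjoint from $\bigcup_{0<|l|<N}\sigma^{l}(F)$ and has pairwise disjoint translates; it could then have been adjoined, forcing $x\in F$ and contradicting the construction. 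This yields $A\subseteq\bigcup_{-N<l<N}\sigma^{l}(F)$, at least away from the boundary subtlety addressed below.

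The main obstacle is that the greedy union is a priori only open, while the lemma demands a clopen $F$, and the coverage step above appears to require cells of unbounded window, in tension with clopen-ness. I would resolve the clopen-ness by compactness: once $A\subseteq\bigcup_{-N<l<N}\sigma^{l}(F)$ is known, the clopen sets $\sigma^{l}(F_{n})$, where $F_{n}$ are the finite stages of the construction and $-N<l<N$, form an open cover of the compact set $A$, so finitely many already cover it; that is, $A\subseteq\bigcup_{-N<l<N}\sigma^{l}(F_{M})$ for some stage $M$. The set $F_{M}$ is a finite union of cylinders, hence clopen, and it still satisfies (1), so taking $F:=F_{M}$ finishes the proof. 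The genuinely delicate point, which I expect to be the heart of the matter, is verifying that the open set $F$ covers \emph{every} point of $A$ — that is, ruling out boundary points of $\bigcup_{0<|l|<N}\sigma^{l}(F)$ that escape coverage — and here the coverage window $-N<l<N$ must be reconciled against the disjointness window $0\le l<N$ with care.
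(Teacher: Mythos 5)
The paper itself offers no proof of this lemma (it simply cites Boyle (2.2)), so I am judging your argument on its own terms. Your central observation is correct and is indeed the engine of the marker lemma: if $x_{-k}\cdots x_k$ is not $i$-periodic for any $0<i<N$, then the cylinder $V_x$ on the window $[-k,k]$ satisfies $V_x\cap\sigma^i(V_x)=\emptyset$ for $0<i<N$, and the set $A$ of such points is clopen. The problem is the construction of $F$. Your greedy scheme processes a countable basis and adjoins each set \emph{in its entirety or not at all}, and your coverage argument needs a markable cell $R$ around an uncovered $x\in A$ that is disjoint from $\bigcup_{0<|l|<N}\sigma^l(F)$. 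That set is open, not closed, so $x\notin\bigcup_{0<|l|<N}\sigma^l(F)$ gives no such neighborhood: every cylinder around $x$ may meet $\sigma^l(F_n)$ at the stage it is processed (without being contained in it), get discarded, and never be reconsidered, so $x$ can be permanently starved. You flag this yourself as ``the heart of the matter,'' but it is exactly the point that is not proved, and the subsequent compactness step extracting a clopen finite stage $F_M$ rests entirely on the unproved coverage of the limit. So as written the proof has a genuine gap.

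The repair is to make the greedy step subtractive rather than all-or-nothing, and to run it over the \emph{finitely many} cylinders comprising $A$. Since membership in $A$ depends only on coordinates in $[-k,k]$, write $A=[w_1]\cup\dots\cup[w_m]$ as a disjoint union of cylinders over words of length $2k+1$ that are not $j$-periodic for any $j<N$. Set $E_0=\emptyset$ and
\[ E_r \;=\; E_{r-1}\,\cup\,\Bigl([w_r]\setminus \bigcup_{-N<l<N}\sigma^l(E_{r-1})\Bigr),\qquad F:=E_m. \]
Then $F$ is clopen (a finite Boolean combination of cylinders), so no compactness extraction is needed. Disjointness of $F,\sigma(F),\dots,\sigma^{N-1}(F)$ follows by induction on $r$: the only nontrivial term is $D_r\cap\sigma^i(D_r)$ with $D_r=[w_r]\setminus\bigcup_{|l|<N}\sigma^l(E_{r-1})$, and this is contained in $[w_r]\cap\sigma^i([w_r])=\emptyset$ by your key claim, while the cross terms vanish because $D_r$ was explicitly cut away from $\sigma^l(E_{r-1})$ for all $|l|<N$. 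Coverage is now immediate: if $x\in[w_r]$, then either $x\in\sigma^l(E_{r-1})\subseteq\sigma^l(F)$ for some $|l|<N$, or $x\in D_r\subseteq F$. This is the standard Krieger/Boyle construction; your version replaces its one essential feature (keeping the uncovered \emph{part} of each cell) with a discard rule that loses coverage.
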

\begin{proof}
See M.~Boyle \cite[(2.2)]{Boyle}.
\end{proof}

The next lemma is essentially a part of the proof of the extension lemma given in M.~Boyle \cite[(2.4) ]{Boyle}. The proof essentially follows that of the extension lemma.

\begin{lem}\label{lem:marker-mix}
Let $(\Lambda,\sigma)$ be a \tsubshift\ and $(\Sigma,\sigma)$, a mixing \tsubshift\ of finite type.
Let $W$ be a finite set of words that appear in some elements of $\Sigma$.
Suppose that $\Lambda$ is not a finite set of periodic points and that $\percon{\Lambda,\sigma}{\Sigma,\sigma}$.
Then, there exists a continuous shift-commuting mapping $\pi : \Lambda \to \Sigma$ such that there exists an element $x \in \pi(\Lambda)$ in which all words of $W$ appear as segments of $x$.
\end{lem}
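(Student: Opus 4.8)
The plan is to build $\pi$ as a sliding block code from $\Lambda$ into $\Sigma$, following the scheme of Boyle's extension lemma, using Krieger's Marker Lemma (Lemma~\ref{lem:Krieger}) to organise the coordinates of each point into marked ``gaps'' and using the mixing of $\Sigma$ (Lemma~\ref{lem:bowen}) to fill these gaps with legal words of $\Sigma$. First I would record a target word: since $\Sigma$ is mixing and every $w \in W$ occurs in $\Sigma$, Lemma~\ref{lem:bowen} lets me interpolate connecting blocks between the finitely many words of $W$ and concatenate them into a single admissible word $W^{*}$ of $\Sigma$, of some length $L$, in which every element of $W$ appears as a segment. It then suffices to arrange that some point of $\pi(\Lambda)$ contains $W^{*}$.

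Next I fix the marker scale. Let $M$ be the mixing constant of Lemma~\ref{lem:bowen} for $\Sigma$, choose $N$ with $N > M + L + 2$, apply Lemma~\ref{lem:Krieger} with a suitable $k > N$, and obtain a clopen marker set $F$ for which $\sigma^{l}(F)$, $0 \le l < N$, are disjoint and the coverage property holds. For $x \in \Lambda$ put $R(x) = \{ n \in \Z ~|~ \sigma^{n}(x) \in F \}$; by disjointness consecutive markers are at least $N$ apart, and by the coverage property any coordinate lying more than $N$ from every marker sits inside a block that is $j$-periodic for some $j < N$. Thus each point decomposes into gaps bounded below by $N$, whose long interiors are forced to be periodic with period less than $N$.

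I would then define $\pi$ coordinate-wise by a finite-radius rule, and this is the crux. Fix one state $s$ of $\Sigma$ to be written at every marker coordinate. In the portion of a gap near its left marker I write an $s$-to-$s$ return word of $\Sigma$ that carries $W^{*}$ as a segment; such a word of the required length exists because the gap has length at least $N > M + L + 2$ and $\Sigma$ is mixing, and because $W^{*}$ is placed within bounded distance of the marker the rule is local. In the interior of a long gap, where the input is $j$-periodic with $j < N$, I map the periodic input block to a periodic word of $\Sigma$ of the same period; here the hypothesis $\Per(\Lambda,\sigma) \subset \Per(\Sigma,\sigma)$ is exactly what guarantees that a legal period-$j$ word of $\Sigma$ is available, and points of $\Lambda$ that are globally $j$-periodic (and hence carry no markers) are mapped by the same periodic rule. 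Because marker membership is a clopen, hence locally determined, condition and all the local choices are anchored at the common state $s$, the resulting map is a sliding block code: it is continuous and shift-commuting, and the anchoring at $s$ together with the return-word and periodic choices ensures that the output is everywhere an admissible bi-infinite word of $\Sigma$, so $\pi(\Lambda) \subseteq \Sigma$.

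Finally, to see $W^{*}$ in the image, since $\Lambda$ is not a finite set of periodic points it is infinite, so it contains a point $z$ possessing at least two markers, hence a genuine gap; by construction the image $\pi(z)$ carries $W^{*}$ as a segment in that gap, so every word of $W$ appears in $\pi(z) \in \pi(\Lambda)$, as required. The main obstacle is the third step: reconciling the demand that $\pi$ be a single globally admissible sliding block code with the fact that gap lengths are unbounded. This is resolved by the marker organisation, by anchoring every gap at the fixed state $s$ so that the return words concatenate legally, and by invoking $\Per(\Lambda,\sigma) \subset \Per(\Sigma,\sigma)$ to map the periodic interiors of long gaps, and the marker-free periodic points, into $\Sigma$; verifying continuity at the interface between the marked and the periodic regions, exactly as in the proof of Boyle's extension lemma, is where the care is required.
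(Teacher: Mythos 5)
Your proposal follows essentially the same route as the paper's proof: Krieger's Marker Lemma organises each point into markers, short gaps, and long periodic gaps; the mixing of $\Sigma$ supplies connecting words carrying a single concatenated target word; the hypothesis $\Per(\Lambda,\sigma)\subset\Per(\Sigma,\sigma)$ handles the periodic interiors and the marker-free periodic points; and the fact that $\Lambda$ is not a finite set of periodic points yields a point with a gap or transition region in which the target word is inserted. The differences are cosmetic (anchoring every marker at one state $s$ rather than an arbitrary symbol map, and slightly loose constants such as $N>M+L+2$ where about $2M+L$ is needed), so this is the paper's argument.
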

\begin{proof}

$\Sigma$ is isomorphic to $\Sigma(G)$ for some directed graph $G = (V,E)$.
Therefore, without loss of generality, we assume that $\Sigma = \Sigma(G)$.
Because $(\Sigma(G),\sigma)$ is a mixing subshift of finite type, there exists an $n>0$ such that for every pair of elements $v,v' \in V$ and every $m \geq n$, there exists a word of the form $v\dots v'$ of length $m$.
In addition, there exists an element $\bar{x} \in \Sigma(G)$ such that $\bar{x}$ contains all words of $W$ as segments.
Let $w_0$ be a segment of $\bar{x}$ that contains all words of $W$.
Let $n_0$ be the length of the word $w_0$.
Let $N = 2n + n_0$.
If $v,v' \in V$ and $m \geq N$, then there exists a word of the form $v \dots w_0 \dots v'$ of length $m \geq N$.
Let $k > 2N$.
Using Krieger's marker lemma, there exists a closed and open subset $F \subset \Lambda$ such that the following conditions hold:
\enuma
\item\label{item:disjoint} the sets $\sigma^l(F), 0 \leq l < N$, are disjoint;
\item\label{item:periodic} if $x \in \Lambda$ and $x \notin \bigcup_{-N < l < N}\sigma^l(F)$, then $x_{-k}\dots x_{k}$ is a $j$-periodic word for some $j < N$;
\item the number $k$ is large enough to ensure that if $j$ is less than $N$ and a $j$-periodic word of length $2k+1$ occurs in some element of $\Lambda$, then that word defines a $j$-periodic orbit which actually occurs in $\Lambda$.
\enumz
The existence of $k$ follows from the compactness of $\Lambda$.
Let $x \in \Lambda$.
If $\sigma^i(x) \in F$, then we {\it mark} $x$ at position $i$.
There exists a large number $L>0$ such that whether or not $\sigma^i(x) \in F$ is determined only by the $2L+1$ block $x_{i-L}\dots x_{i+L}$.
If $x$ is marked at position $i$, then $x$ is unmarked for position $l$ with $i < l < i+N$.
Suppose that $x_i \dots x_{i'}$ is a segment of $x$ such that $x$ is marked at $i$ and $i'$ and that $x$ is unmarked at $l$ for all $i < l <i'$.
Then, $i' - i \geq N$.
If $x \in \bigcup_{-N < l < N} \sigma^l(F)$, then $x$ is marked at some $i$ where $-N < i < N$.
Suppose that $x_{-N+1}\dots x_{N-1}$ is an unmarked segment.
Then, $x \notin  \bigcup_{-N < l < N} \sigma^l(F)$, and according to condition (\ref{item:periodic}) $x_{-k}\dots x_{k}$ is a $j$-periodic word for some $j < N$.
Suppose that $x_{i} \dots x_{i'}$ is an unmarked segment of length at least $2N-1$, i.e. $i'-i \geq 2N-2$.
Then, for each $l$ with $i+N-1 \leq l \leq i'-N+1$, $x_{l-k}\dots x_{l+k}$ is a $j$-periodic word for some $j < N$.
Therefore, it is easy to check that $x_{i+N-1-k} \dots x_{i'-N+1+k}$ is a $j$-periodic word for some $j < N$.
In this proof, we call a maximal unmarked segment an {\it interval}. 
Let $x \in \Lambda$.
Let $\dots x_i$ be a left infinite interval.
Then, it is $j$-periodic for some $j < N$.
Similarly, a right infinite interval $x_i \dots $ is $j$-periodic for some $j < N$.
If $x$ itself is an interval, then it is a periodic point with period $j < N$.
If an interval is finite, then it has a length of at least $N-1$.
We call intervals of length less than $2N - 1$ as {\it short} intervals.
We call intervals of length greater than or equal to $2N - 1$ as {\it long } intervals.
If $x$ has a long interval $x_i \dots x_{i'}$, then $x_{i+N-1-k} \dots x_{i'-N+1+k}$ is $j$-periodic for some $j < N$.
We have to construct a shift-commuting mapping $\phi : \Lambda \to \Sigma$.
Let $V'$ be the set of symbols of $\Lambda$.
Let $\Phi : V' \to V$ be an arbitrary mapping.
Let $x \in \Lambda$. Suppose that $x$ is marked at $i$.
Then, we let $(\phi(x))_{i}$ be $\Phi(x_{i})$.
We map periodic points of period $j < N$ to periodic points of $\Sigma$.
Then, we construct a coding of $\phi(x)$ in three parts.
For any $(v,v',l) \in V\times V \times \{N-1,N,N+1,\dots,2N-2\}$, we choose a word $\Psi(v,v',l)$ in $G$ of length $l$ such that the word of the form $v \Psi(v,v',l) v'$ is a path in $G$.

(A) {\it Coding for short interval:}
Let $x_i \dots x_{i'}$ be a short interval. Then, $x$ is marked at $i-1$ and $i'+1$.
We have already defined a code for position $i-1$ and $i'+1$ as $\Phi(x_{i-1})$ and $\Phi(x_{i'+1})$, respectively.
The coding for $\{i, i+1, i+2, \dots, i' \}$ is defined by the path $\Psi(\Phi(x_{i-1}),\Phi(x_{i'+1}),i'-i+1)$.

(B) {\it Coding for periodic segment: }
For an infinite or a long interval, there exists a corresponding periodic point of $\Lambda$. The periodic points of $\Lambda$ are already mapped to periodic points of $\Sigma$.
Therefore, an infinite or a long periodic segment can be mapped to a naturally corresponding periodic segment.

(C) {\it Coding for transition part: }
To consider a transition segment, let $x_{i} \dots x_{i'}$ be a long interval.
Then, $x_{i-1}$ has already been mapped to $\Phi(x_{i-1})$ and $x_{i+N-1}$ is mapped according to periodic points. Assume $x_{i+N-1}$ is mapped to $v_0$.
The segment $x_{i-1} \dots x_{i+N-1}$ has length $N+1$.
We map the segment $x_{i} \dots x_{i+N-2}$ to $\Psi(\Phi(x_{i-1}),v_0,N-1)$.
In the same manner, the transition coding of right hand side of a long interval is defined.
In the same manner, the transition coding of the left or the right infinite interval is defined.
It is easy to check that there exists a large number $L'>0$ such that the coding of $(\phi(x))_i$ is determined only by the block $x_{i-L'} \dots x_{i+L'}$.
Therefore, $\phi : \Lambda \to \Sigma$ is continuous.
Because $\Lambda$ is not a set of finite periodic points, there exists an $x \in \Lambda$ such that $x$ contains at least one transition segment or at least one short interval.
In the above coding, we can take $\Psi$ such that a short interval or a transition segment is mapped to a word that involves $w_0$.
\end{proof}

\begin{prop}\label{prop:main}
Let $(\Sigma,\sigma)$ be a topologically mixing \tsubshift\ of finite type such that $\Sigma$ is homeomorphic to $C$.
Let $(\Lambda,\sigma)$ be a \tsubshift\ such that $\Lambda$ is homeomorphic to $C$.

%Suppose that $(\Lambda,\sigma)$ has infinitely many periodic points.

Then, $(\Lambda,\sigma) \kinsetu (\Sigma,\sigma)$ if and only if $\percon{\Lambda,\sigma}{\Sigma,\sigma}$.
\end{prop}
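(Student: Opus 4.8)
The plan is to dispatch the two implications separately. The direction ``$(\Lambda,\sigma) \kinsetu (\Sigma,\sigma)$ implies $\percon{\Lambda,\sigma}{\Sigma,\sigma}$'' is nothing more than Lemma~\ref{lem:percon}, so all the substance lies in the converse. Assume then that $\percon{\Lambda,\sigma}{\Sigma,\sigma}$; the goal is to produce homeomorphisms $\psi_k : \Lambda \to \Sigma$ with $\conjugacy{\sigma}{\psi_k} \to \sigma$. The natural vehicle is Lemma~\ref{lem:factorinto} applied with $X = \Sigma$, $Y = \Lambda$, and $f = g = \sigma$: it suffices to exhibit a sequence of finite partitions $\U_k$ of $\Sigma$ into non-empty clopen sets with $\mesh(\U_k) \to 0$, together with continuous shift-commuting maps $\pi_k : \Lambda \to \Sigma$ whose images meet every member of $\U_k$.

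For the partitions I would take cylinders over a symmetric block of coordinates. Writing $\Sigma = \Sigma(G)$ for a suitable graph $G$, let $\U_k := \{\, C_{-k}(w) ~|~ w \in W(2k+1,G) \,\}$, the partition of $\Sigma$ into the cylinders that fix the coordinates in $\{-k,\dots,k\}$; these are non-empty clopen sets and $\mesh(\U_k) \to 0$. For the maps I would invoke Lemma~\ref{lem:marker-mix} with $W = W(2k+1,G)$. Its hypotheses hold: $\Lambda$ is homeomorphic to $C$, hence perfect and in particular not a finite set of periodic points, and $\percon{\Lambda,\sigma}{\Sigma,\sigma}$ is our standing assumption. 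The lemma then furnishes a continuous shift-commuting $\pi_k : \Lambda \to \Sigma$ together with a point $x \in \pi_k(\Lambda)$ in which every word of $W(2k+1,G)$ occurs as a segment.

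It remains to convert this occurrence statement into the required covering condition, which is the only genuinely new step. Because $\commute{\sigma}{\pi_k}{\sigma}$ and $\Lambda$ is $\sigma$-invariant, the image $\pi_k(\Lambda)$ is itself $\sigma$-invariant. Hence if $w \in W(2k+1,G)$ appears in $x$ as a segment beginning at coordinate $j$, then the shifted point $\sigma^{j+k}(x)$ lies in $C_{-k}(w) \cap \pi_k(\Lambda)$, so $\pi_k(\Lambda) \cap C_{-k}(w) \notkuu$ for every $w \in W(2k+1,G)$; that is, $\pi_k(\Lambda)$ meets every element of $\U_k$. All hypotheses of Lemma~\ref{lem:factorinto} are now in force, and it yields the homeomorphisms $\psi_k$ with $\conjugacy{\sigma}{\psi_k} \to \sigma$, completing the converse and hence the proposition. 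I expect no serious obstacle at this stage: the combinatorial difficulty --- constructing a shift-commuting factor map that simultaneously realizes a prescribed finite family of words --- has already been absorbed into Lemma~\ref{lem:marker-mix}, and what remains is the choice of a symmetric cylinder partition together with the short invariance argument just given.
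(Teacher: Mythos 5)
Your proposal is correct and follows essentially the same route as the paper: the forward direction via Lemma~\ref{lem:percon}, and the converse by combining Lemma~\ref{lem:marker-mix} (with $W = W(2k+1,G)$) and Lemma~\ref{lem:factorinto} using the cylinder partitions $\U_k = \{C_{-k}(w) \mid w \in W(2k+1,G)\}$. The only difference is that you spell out two small steps the paper leaves implicit (that $\Lambda \cong C$ is perfect and hence not a finite set of periodic points, and the shift-invariance argument placing a point of $\pi_k(\Lambda)$ in each cylinder), which is harmless.
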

\begin{proof}
If $(\Lambda,\sigma) \kinsetu (\Sigma,\sigma)$, then by lemma \ref{lem:percon}, we obtain $\percon{\Lambda,\sigma}{\Sigma,\sigma}$.
Suppose that $\percon{\Lambda,\sigma}{\Sigma,\sigma}$.
Without loss of generality, we can assume that $\Sigma = \Sigma(G)$ for some directed graph $G = (V,E)$.
We assume that every vertex of $V$ has both at least one outdegree and at least one indegree.
Let $k \beposint$.
Because $(\Sigma,\sigma)$ is topologically mixing, by lemma \ref{lem:marker-mix}, there exists a continuous shift-commuting mapping $\pi_k : \Lambda \to \Sigma$ and $x \in \pi_k(\Lambda)$ such that $x$ contains all words of length $2k+1$ of $\Sigma$.
Let $\U_k = \{C_{-k}(w) ~|~ w \in W(2k+1,G)\}$.
Then, $\pi_k(\Lambda) \cap U \notkuu$ for all $U \in \U_k$.
Because $k$ is arbitrary, by lemma \ref{lem:factorinto}, we conclude that $(\Lambda,\sigma) \kinsetu (\Sigma(G),\sigma)$.
\end{proof}

Proof of Theorem \ref{thm:main}
\begin{proof}
If $(\Lambda,\sigma) \kinsetu (X,f)$, then by lemma \ref{lem:percon}, we obtain  $\percon{\Lambda,\sigma}{X,f}$.
Let $\percon{\Lambda,\sigma}{X,f}$ hold.
Consider a sequence $\{\U_k\}_{k=1,2,\dots}$ of partitions of $X$ by non-empty open and closed subsets such that $\mesh(\U_k) \to 0$ as $k \to \infty$.
Assume $k \beposint$.
Let $G_k = G_{f,\U_k}$.
Let $\delta > 0$ be such that any $x,x' \in X$ with $d(x,x') < \delta$ are contained in the same element of $\U_k$.
Let $\{x_0,x_1\}$ be a $\delta$ chain.
Let $U,U' \in \U_k$ be such that $x_0 \in U$ and that $x_1 \in U'$.
Then, $f(U) \cap U' \notkuu$.
Therefore, $(U,U')$ is an edge of $G_k$.
Let $U,V \in \U_k$.
Let $x \in U$ and $y \in V$.
Because $f$ is chain mixing, there exists an $N > 0$ such that for every $n > N$, there exists a $\delta$ chain from $x$ to $y$ of length $n$.
Therefore, for every $n > N$, there exists a path in $G_k$ from $U$ to $V$ of length $n$.
From Lemma \ref{lem:bowen}, $(\Sigma(G_k),\sigma)$ is topologically mixing.
By lemma \ref{lem:mix-Cantor}, $\Sigma(G_k)$ is homeomorphic to $C$.
Therefore, there exists a homeomorphism $\psi_k : \Sigma(G_k) \to X$ such that for any vertex $u$ of $G_k$, $\psi_k(C_0(u)) = u$.
By construction, we obtain $G_{\conjugacy{\sigma}{\psi_k},\U_k} = G_{f,\U_k}$.
Because $\mesh(\U_k) \to 0$ as $k \to \infty$, by lemma \ref{lem:kinji}, we find that $\conjugacy{\sigma}{\psi_k} \to f$ as $k \to \infty$.
On the other hand, it is easy to verify that $\Per(X,f) \subset \Per(\Sigma(G_k),\sigma)$.
By assumption, we obtain $\Per(\Lambda,\sigma) \subset \Per(\Sigma(G_k),\sigma)$.
From proposition \ref{prop:main}, we obtain $(\Lambda,\sigma) \kinsetu (\Sigma(G_k),\sigma)$.
Therefore, by lemma \ref{lem:seqkinsetu}, we obtain $(\Lambda,\sigma) \kinsetu (X,f)$.
\end{proof}

Proof of Corollary \ref{cor:main1}
\begin{proof}
If a topological dynamical system $(X,f)$ has a fixed point $x_0$, then $\Per(X,f) = \Posint$.
Therefore, the proof is a direct consequence of theorem \ref{thm:main}.
\end{proof}

Proof of Corollary \ref{cor:main2}
\begin{proof}
Let $(\Lambda,\sigma)$ be a \tsubshift\ without periodic points.
Then, $\Per(\Lambda,\sigma) = \kuu$.
Therefore, from theorem \ref{thm:main}, the conclusion follows.
\end{proof}

Proof of Corollary \ref{cor:fullshifts}
\begin{proof}
A \tfullshift\ is chain mixing and has a fixed point.
Therefore, the conclusion is a direct consequence of corollary \ref{cor:main1}.
\end{proof}

\vspace{3cm}
%% \begin{figure}
%% \begin{center}
%% \includegraphics{figure.eps}
%% \caption{Caption}
%% \end{center}
%% \label{area}
%% \end{figure}
%% \begin{thebibliography}{9, 99 or Abc99}
%% \begin{thebibliography}{9}  for 1-digit labels
%% \begin{thebibliography}{99}  for 2-digit labels
%% \begin{thebibliography}{Abc}  for alphanumeric labels

\end{document}